\newtheorem{theorem}{\bf Theorem}[section] \newtheorem{definition}{\bf Definition}[section]
\newtheorem{lemma}{\bf Lemma}[section] 
 \newtheorem{corollary}{\bf Corollary}[section] \newtheorem{proposition}{\bf Proposition}[section]
\newtheorem{claim}{\bf Claim}[section]
\newcommand\rmc{\mathrm{c}}
\newcommand\rmo{\mathrm{o}}
\newcommand\rmr{\mathrm{r}}
\newcommand\rmu{\mathrm{u}}
\newcommand\rmx{\mathrm{x}}
\newcommand\rmy{\mathrm{y}}
\newcommand\bbi{\mathbb{I}}
\newcommand\bbr{\mathbb{R}}
\newcommand\calT{\mathcal{T}}
\title{\LARGE \bf
A quantitative and constructive proof of Willems' Fundamental Lemma and its implications
}
\author{
Julian Berberich$^1$, Andrea Iannelli$^2$, Alberto Padoan$^2$, Jeremy Coulson$^2$, Florian D\"orfler$^2$, and Frank Allg\"ower$^1$
\thanks{
F. Allgöwer is thankful that his work was funded by Deutsche Forschungsgemeinschaft (DFG, German Research Foundation) under Germany's Excellence Strategy - EXC 2075 - 390740016 and under grant 468094890. 
		F. Allgöwer acknowledges the support by the Stuttgart Center for Simulation Science (SimTech).}
\thanks{$^1$University of Stuttgart, Institute for Systems Theory and Automatic Control, 70550 Stuttgart, Germany (email:$\{$julian.berberich, frank.allgower$\}$@ist.uni-stuttgart.de)}
\thanks{$^2$Department of Information Technology and
Electrical Engineering, ETH Zurich, 8092 Zurich, Switzerland (email:$\{$iannelli,apadoan,jcoulson,doerfler$\}$@control.ee.ethz.ch)}
}
\begin{document}

\maketitle
\thispagestyle{empty}
\pagestyle{empty}

\begin{abstract}
Willems' Fundamental Lemma provides a powerful data-driven parametrization of all trajectories of a controllable linear time-invariant system based on one trajectory with persistently exciting (PE) input.
In this paper, we present a novel proof of this result which is inspired by the classical adaptive control literature and differs from existing proofs in multiple aspects. 
The proof involves a quantitative and directional PE notion, allowing to characterize robust PE properties via singular value bounds, as opposed to binary rank-based PE conditions.
Further, the proof is constructive, i.e., we derive an explicit PE lower bound for the generated data.
As a contribution of independent interest, we generalize existing PE results from the adaptive control literature and reveal a crucial role of the system's zeros.
\end{abstract}


\section{Introduction}
The Fundamental Lemma by Willems et al.~\cite{willems2005note} allows all finite-length trajectories of a controllable linear time-invariant (LTI) system to be parametrized based on one input-output trajectory with persistently exciting (PE) input.
Starting from this data-driven system parametrization, a plethora of system analysis and control methods have been developed, analyzed, and successfully applied in practice, see~\cite{markovsky2021behavioral} for a recent survey. 
Further, different generalizations of the Fundamental Lemma have been developed in recent years, e.g., to certain classes of nonlinear systems~\cite{berberich2020trajectory,rueda2020data,verhoek2021fundamental} or by relaxing some of its assumptions~\cite{yu2021controllability,mishra2021controllability,markovsky2022identifiability}.
Whereas the original result by~\cite{willems2005note} was formulated and proven in the behavioral framework, the recent paper~\cite{waarde2020willems} contains a proof in the state-space framework.
An alternative proof in the behavioral framework for the single-input case is provided in~\cite{markovsky2023persistency}.
In this paper, we present a quantitative and constructive proof of the Fundamental Lemma based on arguments from the classical literature on adaptive control~\cite{moore1983persistence,green1986persistence,astrom2008adaptive}.

PE properties and their implications on parameter convergence have been at the center of the field of adaptive control for several decades~\cite{moore1983persistence,green1986persistence,astrom2008adaptive}.
These classical PE notions are typically based on squares and summation, e.g., for some signal $\{u_k\}_{k=0}^{N-1}$, the sum $\sum_{k=0}^{N-1} u_ku_k^\top$ is positive definite.
On the contrary, the Fundamental Lemma~\cite{willems2005note} and its applications in data-driven control~\cite{markovsky2021behavioral} mainly rely on a rank-based PE notion.
That is, the Hankel matrix of depth $L$ corresponding to the sequence $\{u_k\}_{k=0}^{N-1}$ with each $u_k\in\bbr^m$,
\begin{align*}
H_L(u)=\begin{bmatrix}u_0&u_1&\dots&u_{N-L}\\
u_1&u_2&\dots&u_{N-L+1}\\
\vdots&\vdots&\ddots&\vdots\\
u_{L-1}&u_L&\dots&u_{N-1}
\end{bmatrix}\in\bbr^{mL\times(N-L+1)},
\end{align*}
needs to have full row rank.
While this rank condition is equivalent to positive definiteness of $H_L(u) H_L(u)^\top$, i.e., to a squares-and-summation-based PE notion, only the latter allows to precisely quantify the richness of the data in terms of lower bounds on the eigenvalues of $H_L(u)H_L(u)^\top$.
This quantification has several important implications which we describe in more detail later in the paper and which motivated~\cite{coulson2022robust} to derive a \emph{robust} Fundamental Lemma.
In this paper, we seek to further reconcile the behavioral approach to data-driven control with the classical adaptive control literature.
To this end, we show that the Fundamental Lemma~\cite{willems2005note} can be proved using as starting point the main result of~\cite{green1986persistence}.
In addition to providing a different viewpoint on the Fundamental Lemma, the presented proof has the following advantages:
1) it involves a quantitative (similar to~\cite{coulson2022robust}) and directional description of PE properties, which leads to more robust 
conditions;
2) it is constructive, i.e., we derive an explicit lower bound on the richness of the data matrices.
Further, open research questions for methods relying on the Fundamental Lemma concern noisy data and nonlinearities in the data-generating system.
Owing to the constructive nature of the proof and its more direct links with PE properties, we believe that the results presented here can provide the basis for further generalizations, which constitute an interesting direction for future research.

Before providing a new proof of the Fundamental Lemma (Section~\ref{sec:WFL}), we first extend the results of~\cite{green1986persistence} in Section~\ref{sec:pe_lin}, where we derive input design conditions to guarantee a PE output signal for arbitrary initial conditions and arbitrary output reachable systems.
In contrast to~\cite{green1986persistence}, our results employ a directional PE notion, leading to less conservative results and illustrating the role of the Markov parameters in a transparent fashion.
Moreover, we show that the PE requirements can be relaxed under certain conditions related to the number of zeros of the system.

\subsubsection*{Notation}
We write $\bbi_{\geq0}$ ($\bbi_{>0}$) for the set of nonnegative (positive) integers and $\bbi_{[a,b]}$ for all integers in the interval $[a,b]$.
An identity matrix of dimension $n$ is denoted by $I_n$.
We denote the image of a matrix $A$ by $\mathrm{im}(A)$.
The minimum eigenvalue and singular value of $A=A^\top$ are defined as $\lambda_{\min}(A)$ and $\sigma_{\min}(A)$, respectively.
We write $A\succ0$ ($A\succeq0$) or $A\prec0$ ($A\preceq0$) if $A$ is positive or negative (semi-)definite, respectively.
The Kronecker product of two matrices $A$ and $B$ is denoted by $A\otimes B$.
Further, we denote a window of a multivariate sequence $\{x_k\}_{k=0}^{N-1}$ by $x_{[a,b]}=\begin{bmatrix}x_a^\top&\dots&x_b^\top\end{bmatrix}^\top$ and we write $x=x_{[0,N-1]}$ for the full stacked sequence.
We write $\bm{\Sigma}_{n,m,p}^{\rmo\rmr}$ for the set of output reachable systems (see Definition~\ref{def:output_reachable}) of order $n$ with $m$ inputs and $p$ outputs, i.e.,
$\bm{\Sigma}_{n,m,p}^{\rmo\rmr}=\{(A,B,C,D)\mid A\in\bbr^{n\times n}, B\in\bbr^{n\times m}, C\in\bbr^{p\times n}, D\in\bbr^{p\times m}, (A,B,C,D)\>\text{output reachable}\}$, and similarly $\bm{\Sigma}_{n,m,p}^{\rmc}$ for the set of controllable systems.
Finally, we define $\{y_k(\Sigma,x_0,u)\}_{k=0}^{N-1}$ as the output sequence generated by applying the input $\{u_k\}_{k=0}^{N-1}$ to the system $\Sigma$ with initial condition $x_0$.

\section{Revisiting persistence of excitation}\label{sec:pe_lin}
After introducing the considered PE notion in Section~\ref{subsec:pe_lin_output}, we characterize under what conditions on the input signal the output of an LTI system is PE in Section~\ref{subsec:pe_lin_imply}.
This result will be instrumental in the new proof of the Fundamental Lemma in Section~\ref{sec:WFL}, and it generalizes the main result of~\cite{green1986persistence} by using a more general quantitative PE notion.
In Section~\ref{subsec:pe_lin_tailored}, we show that the input PE condition can be relaxed under an assumption on the Markov parameters.

\subsection{Quantitative notion of PE}\label{subsec:pe_lin_output}
Consider the discrete-time LTI system
\begin{align}\label{eq:sys_LTI}
\Sigma_0:
\begin{cases}
x_{k+1}&=Ax_k+Bu_k,\\
y_k&=Cx_k+Du_k
\end{cases}
\end{align}
with state $x_k\in\bbr^n$, input $u_k\in\bbr^m$, and output $y_k\in\bbr^p$.
Collect the first $n+1$ Markov parameters of~\eqref{eq:sys_LTI} in the matrix
\begin{align}\nonumber
\Gamma=&\begin{bmatrix}\Gamma_0&\Gamma_1&\dots&\Gamma_n\end{bmatrix}\\\label{eq:def_output_reachable}
=&\begin{bmatrix}D&CB&CAB&\dots&CA^{n-1}B\end{bmatrix}.
\end{align}
The following is a classical definition of persistence of excitation~\cite{willems2005note}.
\begin{definition}\label{def:pe_classical}
\cite{willems2005note}
The sequence $\{u_k\}_{k=0}^{N-1}$ with $u_k\in\bbr^m$ is persistently exciting (PE) of order $L$ if $\mathrm{rank}(H_L(u))=mL$.
\end{definition}
This concept of persistence of excitation has the shortcoming that it is merely \emph{qualitative} in the sense that, for a fixed order $L$, it is not possible to compare the PE levels of different signals.
This motivates the following quantitative version of persistence of excitation as recently proposed by~\cite{coulson2022robust} in the context of data-driven control and originally proposed in a similar form in the adaptive control literature~\cite{moore1983persistence,green1986persistence,astrom2008adaptive}.
\begin{definition}
\label{def:pe}
The sequence $\{u_k\}_{k=0}^{N-1}$ is $K$-PE of order $L$ for some matrix $K\succ0$ if 
\begin{align}\label{eq:pe}
H_L(u)H_L(u)^\top=\sum_{k=0}^{N-L}u_{[k,k+L-1]}u_{[k,k+L-1]}^\top\succeq K.
\end{align}
\end{definition}
Note that $\{u_k\}_{k=0}^{N-1}$ is $K$-PE of order $L$ for some $K\succ0$ if and only if $\{u_k\}_{k=0}^{N-1}$ is PE of order $L$.
However, the flexibility gained via $K$ in Definition~\ref{def:pe} allows for a more precise quantification of PE properties than Definition~\ref{def:pe_classical}.
In particular, any choice $K$ in Definition~\ref{def:pe} always reduces to the same rank condition in Definition~\ref{def:pe_classical}.

Persistence of excitation properties of system components, e.g., states, outputs, or regressors, play a crucial role in the classical literature on parameter estimation and adaptive control~\cite{moore1983persistence,green1986persistence,astrom2008adaptive}, where they can be used to guarantee convergence, as well as in the recent literature on data-driven control, see~\cite{markovsky2021behavioral} for an overview.
In contrast to the classical works~\cite{moore1983persistence,green1986persistence,astrom2008adaptive}, which  typically involve infinitely many, shifted time windows or infinite sums, Definition~\ref{def:pe} imposes a condition over one finite time interval.
As a second difference to these results and to the recently suggested PE notion by~\cite{coulson2022robust}, we propose a not necessarily diagonal lower bound $K$, which allows to capture directional PE properties.

\subsection{When does a PE input imply a PE output?}\label{subsec:pe_lin_imply}
By the Cayley-Hamilton theorem, there exists a polynomial $d(z)=\sum_{j=0}^n d_j z^{n-j}$ with
\begin{align}
d=\begin{bmatrix}d_n&\cdots&d_1&d_0\end{bmatrix}^\top\in\bbr^{n+1},\>\>d_0\neq0,
\end{align}
such that $d(A)=0$.
Without loss of generality, we assume $\lVert d\rVert_2=1$.
Further, we define 
\begin{align}
M=\begin{bmatrix}M_n&\dots&M_1&M_0\end{bmatrix}\in\bbr^{p\times m(n+1)}
\end{align}
with $M_j=\sum_{q=0}^jd_{j-q}\Gamma_q$.
The following input-output representation of system~\eqref{eq:sys_LTI} will play a crucial role, cf.\ \cite{green1986persistence}.
\begin{lemma}\label{lem:IO_rep}
\cite{green1986persistence}
Any trajectory $\{u_k,y_k\}_{k=0}^{\infty}$ of~\eqref{eq:sys_LTI} satisfies, for any $k\geq n$,
\begin{align}\label{eq:sys_LTI_IO}
M u_{[k-n,k]}=\begin{bmatrix}y_{k-n}&\dots&y_{k-1}&y_k\end{bmatrix} d.
\end{align} 
%
\end{lemma}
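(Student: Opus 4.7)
The plan is to prove the identity by direct substitution, using the closed-form expression of the output of~\eqref{eq:sys_LTI} together with the Cayley--Hamilton identity $\sum_{j=0}^n d_j A^{n-j}=0$. For $k\geq n$, I would expand each $y_{k-j}$ via
\begin{align*}
y_{k-j}=CA^{k-j}x_0+\sum_{s=0}^{k-j-1}CA^{k-j-1-s}Bu_s+Du_{k-j},
\end{align*}
form the weighted sum $\sum_{j=0}^n d_j y_{k-j}$ (which is precisely $[y_{k-n}\,\dots\,y_k]d$), and show that it equals $\sum_{i=0}^n M_i u_{k-i}=Mu_{[k-n,k]}$.

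First I would handle the initial-condition contribution. Collecting the $x_0$-terms gives $C\bigl(\sum_{j=0}^n d_j A^{n-j}\bigr)A^{k-n}x_0$, which vanishes by Cayley--Hamilton (this is precisely where $k\geq n$ is needed so that $A^{k-n}$ is well-defined).

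Next I would collect the coefficients of the inputs in the remaining double sum. For each index $i$ in the range $0\leq i\leq n$, the input $u_{k-i}$ receives a contribution $d_i D=d_i\Gamma_0$ from the feedthrough (the $j=i$ term in the $Du_{k-j}$ sum) and a contribution $\sum_{j=0}^{i-1}d_j\,CA^{i-j-1}B=\sum_{q=1}^{i}d_{i-q}\Gamma_q$ from the convolution sums (obtained by requiring $s=k-i$, i.e., $j\leq i-1$, and substituting $q=i-j$). Adding these yields $\sum_{q=0}^i d_{i-q}\Gamma_q=M_i$, exactly matching the definition of the $i$-th block of $M$.

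The main obstacle is showing that no additional contributions appear, i.e., that the coefficients of $u_{k-i}$ for $i\geq n+1$ vanish. For such $i$, only the convolution sums contribute, and the coefficient equals $\sum_{j=0}^n d_j\,CA^{i-j-1}B=C\bigl(\sum_{j=0}^n d_j A^{n-j}\bigr)A^{i-n-1}B$, which is zero by Cayley--Hamilton (using $i-j-1=(n-j)+(i-n-1)$ with $i-n-1\geq 0$). Combining the three cases gives
\begin{align*}
\sum_{j=0}^n d_j y_{k-j}=\sum_{i=0}^n M_i u_{k-i}=Mu_{[k-n,k]},
\end{align*}
which is~\eqref{eq:sys_LTI_IO} after recognizing that the left-hand side equals $[y_{k-n}\,\dots\,y_{k-1}\,y_k]d$ by definition of $d$. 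The proof is essentially careful index bookkeeping; I do not anticipate any other difficulty beyond keeping the two reversed orderings (the block order of $M$ versus the time order of $u_{[k-n,k]}$) consistent.
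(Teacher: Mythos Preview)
Your argument is correct. The three-way split into the initial-condition term, the ``recent'' inputs $u_{k-i}$ with $0\leq i\leq n$, and the ``old'' inputs $u_{k-i}$ with $i\geq n+1$ is exactly right, and both applications of the annihilating-polynomial identity $\sum_{j=0}^n d_j A^{n-j}=0$ are legitimate because $k\geq n$ (respectively $i\geq n+1$) guarantees the residual exponent $k-n$ (respectively $i-n-1$) is nonnegative. The index bookkeeping that turns the convolution contribution into $\sum_{q=1}^{i}d_{i-q}\Gamma_q$ and then recombines it with the feedthrough term $d_i\Gamma_0$ to produce $M_i$ is also correct.

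As for comparison: the paper does not actually prove Lemma~\ref{lem:IO_rep}; it simply cites~\cite{green1986persistence} and moves on. Your direct-substitution proof via Cayley--Hamilton is the standard route and is essentially the argument in the original reference, so there is nothing to contrast.
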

%
Note that, for a single-input single-output (SISO) system, $M$ and $d$ simply contain the coefficients of the transfer function.
The following alternative representations of $M$ will be useful later in the paper:
%
\begin{align}\label{eq:M_alt}
M=(d^\top\otimes I_p)\bar{\Gamma}=\Gamma\left(\bar{D}\otimes I_m\right),
\end{align}
where
\begin{align}\label{eq:D_bar_def}
\bar{D}&=
\begin{bmatrix}d_n&d_{n-1}&\dots&d_0\\
d_{n-1}&d_{n-2}&\reflectbox{$\ddots$}&0\\
\vdots&\reflectbox{$\ddots$}&\reflectbox{$\ddots$}&\vdots\\
d_0&0&\dots&0\end{bmatrix}\in\bbr^{(n+1)\times(n+1)},\\\label{eq:Gamma_bar_def}
\bar{\Gamma}&=\begin{bmatrix}\Gamma_0&0&\dots&0\\
\Gamma_1&\Gamma_0&\ddots&\vdots\\
\vdots&\ddots&\ddots&0\\\
\Gamma_n&\dots&\Gamma_1&\Gamma_0
\end{bmatrix}\in\bbr^{p(n+1)\times m(n+1)}.
\end{align}
The following result characterizes %
PE properties 
of the output of~\eqref{eq:sys_LTI} in terms of the input for arbitrary initial conditions.
\begin{theorem}\label{thm:PE_Nu_y}
If the signal $\{Mu_{[k-n,k]}\}_{k=n}^{N-1}$ is PE of order $1$, then, for any $x_0\in\bbr^n$, the sequence $\{y_k(\Sigma_0,x_0,u)\}_{k=0}^{N-1}$ is PE of order $1$.
Further, if $\{Mu_{[k-n,k]}\}_{k=n}^{N-1}$ is $K_\rmu$-PE of order $1$, then, for any $x_0\in\bbr^n$, $\{y_k(\Sigma_0,x_0,u)\}_{k=0}^{N-1}$ is $K_\rmy$-PE of order $1$ with
\begin{align}\label{eq:thm_PE_Nu_y}
K_\rmy=\frac{1}{n+1}K_\rmu.
\end{align}
\end{theorem}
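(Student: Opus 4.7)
The plan is to relate the Hankel expression on both sides using the input–output representation in Lemma~\ref{lem:IO_rep}, then use a Cauchy--Schwarz–style inequality to control the resulting sum of squares by the sum of squares of the outputs. Since the PE order is $1$ on both sides, the statement reduces to a pair of matrix inequalities over the outer-product sums $\sum_k y_k y_k^\top$ and $\sum_k (Mu_{[k-n,k]})(Mu_{[k-n,k]})^\top$; importantly, the initial condition $x_0$ will drop out because Lemma~\ref{lem:IO_rep} is homogeneous in $d$ and holds along \emph{any} trajectory, hence for arbitrary $x_0$.

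First, for each $k\geq n$, I would rewrite Lemma~\ref{lem:IO_rep} in the form
\begin{align*}
M u_{[k-n,k]} \;=\; \sum_{i=0}^n d_i\, y_{k-i},
\end{align*}
where the $d_i$ are the coefficients of the Cayley--Hamilton polynomial with $\lVert d\rVert_2 = 1$. Next, for an arbitrary test vector $v\in\bbr^p$, I would apply the Cauchy--Schwarz inequality to obtain
\begin{align*}
\bigl(v^\top M u_{[k-n,k]}\bigr)^2
\;\leq\; \Bigl(\sum_{i=0}^n d_i^2\Bigr)\!\Bigl(\sum_{i=0}^n (v^\top y_{k-i})^2\Bigr)
\;=\; \sum_{i=0}^n (v^\top y_{k-i})^2,
\end{align*}
using $\lVert d\rVert_2 = 1$. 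Summing over $k = n,\dots,N-1$ and swapping the order of summation in $i$ and $k$, each index $y_{k'}$ for $k' \in \bbi_{[0,N-1]}$ appears at most $n+1$ times on the right-hand side, so one gets
\begin{align*}
\sum_{k=n}^{N-1}\bigl(v^\top M u_{[k-n,k]}\bigr)^2 \;\leq\; (n+1)\sum_{k=0}^{N-1}(v^\top y_k)^2.
\end{align*}

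Finally, I would translate this scalar inequality back into a matrix inequality: the left-hand side equals $v^\top H_1(Mu_{[\cdot-n,\cdot]})H_1(Mu_{[\cdot-n,\cdot]})^\top v$, while the right-hand side equals $(n+1) v^\top H_1(y)H_1(y)^\top v$. Combining this with the $K_\rmu$-PE hypothesis gives $v^\top K_\rmu v \leq (n+1)\, v^\top H_1(y)H_1(y)^\top v$ for every $v$, which is exactly the desired $K_\rmy$-PE bound with $K_\rmy = \frac{1}{n+1}K_\rmu$. The order-$1$ PE statement then follows immediately, and the order-of-PE claim without the quantitative bound is recovered by observing that $K_\rmu\succ 0$ implies $K_\rmy\succ 0$. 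The only subtle step is the index-counting in the double sum, which is the reason the constant $n+1$ appears; everything else is an application of Lemma~\ref{lem:IO_rep} and Cauchy--Schwarz, with no obstacle of a deeper nature.
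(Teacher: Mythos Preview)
Your proof is correct and follows essentially the same approach as the paper: both use Lemma~\ref{lem:IO_rep} to write $Mu_{[k-n,k]}$ as a $d$-weighted combination of outputs, apply the inequality $dd^\top\preceq\lVert d\rVert_2^2 I$ (equivalently, Cauchy--Schwarz), and then use the same index-counting to pick up the factor $n+1$. The only cosmetic difference is that the paper works directly with matrix inequalities throughout, whereas you scalarize via a test vector $v$ and translate back at the end.
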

\begin{proof}
Let us abbreviate $y(\Sigma_0,x_0,u)$ by $y$.
Note that
\begin{align}\label{eq:thm_PE_nu_y_proof1}
K_\rmu \preceq &\sum_{k=n}^{N-1}Mu_{[k-n,k]}u_{[k-n,k]}^\top M^\top\\\nonumber
\stackrel{\eqref{eq:sys_LTI_IO}}{=}&\sum_{k=n}^{N-1}
\begin{bmatrix}y_{k-n}&\dots&y_k\end{bmatrix} dd^\top
\begin{bmatrix}y_{k-n}&\dots&y_k\end{bmatrix}^\top\\\nonumber
\preceq&\lVert d\rVert_2^2\sum_{k=n}^{N-1}
\sum_{j=0}^n y_{k-j}y_{k-j}^\top\preceq
(n+1)\sum_{k=0}^{N-1} y_ky_k^\top,
\end{align}
using $\lVert d\rVert_2^2=1$ in the last step.
This proves both
statements.
\end{proof}
Theorem~\ref{thm:PE_Nu_y} extends~\cite[Theorem 2.1]{green1986persistence} by providing a matrix-valued lower bound~\eqref{eq:thm_PE_Nu_y} instead of a diagonal one, i.e., $K_\rmu=k_\rmu I$ for some $k_\rmu>0$.
The result yields a 
sufficient condition for designing an input which guarantees a PE output, i.e.,
$\{Mu_{[k-n,k]}\}_{k=n}^{N-1}$ needs to be PE of order $1$.
In~\cite{green1986persistence}, it is claimed that this condition is not only sufficient but also necessary for achieving PE of the output
for arbitrary initial conditions.
This is, however, not true.
In the appendix, we give a counterexample and discuss where the proof of~\cite{green1986persistence} fails.

Note that the input design condition in Theorem~\ref{thm:PE_Nu_y} involves the matrix $M$, which is typically unknown in estimation and data-driven control setups.
In order to develop conditions which only involve the input, the concept of \emph{output reachability} is essential, compare~\cite{green1986persistence}.
\begin{definition}\label{def:output_reachable}
The system $\Sigma$ is \emph{output reachable} if, for any $\hat{y}\in\bbr^p$, there exist $T\in\bbi_{\geq0}$ and an input sequence $\{u_k\}_{k=0}^T$ such that the resulting output $\{y_k(\Sigma,0,u)\}_{k=0}^T$ satisfies $y_T(\Sigma,0,u)=\hat{y}$.
\end{definition}
By writing down the explicit output evolution, one observes that system~\eqref{eq:sys_LTI} is output reachable if and only if $\Gamma$ has full row rank, that is, if and only if $M$ has full row rank~\cite[Lemma 2.1]{green1986persistence}.
Note that, if the signal $\{Mu_{[k-n,k]}\}_{k=n}^{N-1}$ is PE of order $1$, then $M$ has full row rank~\cite[Lemma 2.2]{green1986persistence}.
Thus, output reachability is necessary for ensuring a PE output via Theorem~\ref{thm:PE_Nu_y}.

The following corollary of Theorem~\ref{thm:PE_Nu_y}, which extends~\cite[Corollary 2.1]{green1986persistence}, provides a design condition for the input signal to ensure a PE output for an arbitrary output reachable system and for arbitrary initial conditions.
\begin{corollary}\label{cor:PE_u_y}
If the signal $\{u_k\}_{k=0}^{N-1}$ is PE of order $n+1$, then, for any $\Sigma\in\bm{\Sigma}_{n,m,p}^{\rmo\rmr}$ and any $x_0\in\bbr^n$,
the sequence $\{y_k(\Sigma,x_0,u)\}_{k=0}^{N-1}$ is PE of order $1$.
Further, if $\{u_k\}_{k=0}^{N-1}$ is $K_\rmu$-PE of order $n+1$, then, for any $\Sigma\in\bm{\Sigma}_{n,m,p}^{\rmo\rmr}$ and any $x_0\in\bbr^n$, $\{y_k(\Sigma,x_0,u)\}_{k=0}^{N-1}$ is $K_\rmy$-PE of order $1$ with
\begin{align}\label{eq:cor_PE_u_y}
K_\rmy=\frac{1}{n+1}MK_\rmu M^\top.
\end{align}
\end{corollary}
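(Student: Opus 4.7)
The plan is to reduce the corollary to Theorem~\ref{thm:PE_Nu_y} by propagating the quantitative PE property on $u$ through the linear map $M$, and then invoking output reachability to guarantee the resulting bound is positive definite.

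First, I would observe that the vectors $u_{[k-n,k]}$ for $k \in \bbi_{[n,N-1]}$ are exactly the columns of the Hankel matrix $H_{n+1}(u)$. Hence the assumption that $\{u_k\}_{k=0}^{N-1}$ is $K_\rmu$-PE of order $n+1$ reads, using Definition~\ref{def:pe},
\begin{align*}
\sum_{k=n}^{N-1} u_{[k-n,k]} u_{[k-n,k]}^\top = H_{n+1}(u) H_{n+1}(u)^\top \succeq K_\rmu.
\end{align*}
Multiplying this semidefinite inequality on the left by $M$ and on the right by $M^\top$ yields
\begin{align*}
\sum_{k=n}^{N-1} \bigl(Mu_{[k-n,k]}\bigr)\bigl(Mu_{[k-n,k]}\bigr)^\top \succeq M K_\rmu M^\top.
\end{align*}

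Next, I would use output reachability to ensure that $M K_\rmu M^\top \succ 0$. As noted just before the corollary, $\Sigma \in \bm{\Sigma}_{n,m,p}^{\rmo\rmr}$ is equivalent to $M$ having full row rank $p$. Combined with $K_\rmu \succ 0$, this gives $M K_\rmu M^\top \succ 0$, so the filtered signal $\{Mu_{[k-n,k]}\}_{k=n}^{N-1}$ is $(M K_\rmu M^\top)$-PE of order $1$ in the sense of Definition~\ref{def:pe}.

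Finally, I would apply Theorem~\ref{thm:PE_Nu_y} with its hypothesis matrix $K_\rmu$ replaced by $M K_\rmu M^\top$. The theorem then delivers, for any $x_0 \in \bbr^n$, that $\{y_k(\Sigma,x_0,u)\}_{k=0}^{N-1}$ is $K_\rmy$-PE of order $1$ with $K_\rmy = \tfrac{1}{n+1} M K_\rmu M^\top$, which is the claimed bound~\eqref{eq:cor_PE_u_y}. The qualitative statement drops out as a special case, since PE of $u$ of order $n+1$ is equivalent to $H_{n+1}(u) H_{n+1}(u)^\top \succ 0$, which combined with full row rank of $M$ again forces $MK_\rmu M^\top \succ 0$. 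I do not anticipate any substantive obstacle: the proof is essentially a one-line matrix congruence plus an application of two previously established facts, and the only subtlety is recognising the Hankel structure that converts a PE-of-order-$(n+1)$ condition on $u$ into a PE-of-order-$1$ condition on the filtered signal needed by Theorem~\ref{thm:PE_Nu_y}.
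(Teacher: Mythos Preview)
Your proposal is correct and follows essentially the same approach as the paper: conjugate the PE inequality for $H_{n+1}(u)$ by $M$, use output reachability to ensure $M$ has full row rank so that $MK_\rmu M^\top\succ0$, and then invoke Theorem~\ref{thm:PE_Nu_y}. The paper phrases the last step as ``the same steps as in the proof of Theorem~\ref{thm:PE_Nu_y}'' rather than a direct application, but the content is identical.
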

\begin{proof}
The first claim follows from Theorem~\ref{thm:PE_Nu_y} since $\{Mu_{[k-n,k]}\}_{k=n}^{N-1}$ is PE of order $1$ for arbitrary $M$ with full row rank if $\{u_{[k-n,k]}\}_{k=n}^{N-1}$ is PE of order $1$, which, in turn, holds if $\{u_k\}_{k=0}^{N-1}$ is PE of order $n+1$.
For the second claim, if $\{u_k\}_{k=0}^{N-1}$ is $K_\rmu$-PE of order $n+1$ then $MK_\rmu M^\top\preceq\sum_{k=n}^{N-1}Mu_{[k-n,k]}u_{[k-n,k]}^\top M^\top$, from which~\eqref{eq:cor_PE_u_y} follows by the same steps as in the proof of Theorem~\ref{thm:PE_Nu_y}.
\end{proof}
Equation~\eqref{eq:cor_PE_u_y} provides a quantitative input design condition to ensure a desired output PE level $K_\rmy\succ0$, i.e., it is sufficient to choose the input as $K_\rmu$-PE of order $n+1$ for some $K_\rmu\succ0$ satisfying $\frac{1}{n+1}MK_\rmu M^\top\succeq K_\rmy$.
Corollary~\ref{cor:PE_u_y} is conceptually also similar to the classical result~\cite[Theorem 2.9]{astrom2008adaptive}, which describes PE properties of filtered signals.
The result remains true when replacing $1$ by any integer $L\in\bbi_{>0}$, i.e., it can be shown that the output is PE of order $L$ if the input is PE of order $L+n$.
This extension is omitted for space reasons.

Corollary~\ref{cor:PE_u_y} extends~\cite[Corollary 2.1]{green1986persistence} by allowing for a matrix-valued PE lower bound, yielding insights on \emph{directional persistence of excitation}.
To be precise, the matrix in~\eqref{eq:cor_PE_u_y} does not only bound the minimum singular value of $\sum_{k=0}^{N-1}y_ky_k^\top$, but is described via a (not necessarily diagonal) matrix depending on the system parameters via $M$.
In fact, inserting $M=\Gamma(\bar{D}\otimes I_m)$ from~\eqref{eq:M_alt} in~\eqref{eq:cor_PE_u_y}, we have
\begin{align}\label{eq:K2_lower_bound_intuitive_output}
K_\rmy\succeq&\frac{\lambda_{\min}(K_\rmu)}{n+1}\sigma_{\min}(\bar{D})^2\Gamma\Gamma^\top.
\end{align}
Thus, the Markov parameters $\Gamma$ directly influence (a lower bound on) the matrix $K_\rmy$ and, in particular, directions that are easy to reach or observe are more PE.
This insight is conceptually similar to~\cite{tsiamis2021linear}, which shows that the difficulty of system identification is influenced by the controllability properties of the underlying system.

Using $M=(d^\top\otimes I_p)\bar{\Gamma}$ by~\eqref{eq:M_alt} as well as $\lVert d\rVert_2=1$, we can also derive the following alternative bound
\begin{align}\label{eq:K2_lower_bound_intuitive_input}
K_\rmy\succeq&\frac{1}{n+1}\lambda_{\min}(\bar{\Gamma}K_\rmu\bar{\Gamma}^\top)I_p.
\end{align}
Since the lower bound~\eqref{eq:K2_lower_bound_intuitive_input} is diagonal, it does not provide any directional information on the richness of the output.
However, in contrast to~\eqref{eq:K2_lower_bound_intuitive_output}, it illustrates the role of the PE directions of the input.
Specifically, the bound depends on $\lambda_{\min}(\bar{\Gamma}K_\rmu\bar{\Gamma}^\top)$, i.e., to be more effective, the input should be tailored to the row space of the Toeplitz matrix $\bar{\Gamma}$ containing the Markov parameters.

To summarize, Corollary~\ref{cor:PE_u_y} implies the bounds~\eqref{eq:K2_lower_bound_intuitive_output} and~\eqref{eq:K2_lower_bound_intuitive_input}, which allow for an intuitive interpretation in terms of directional persistence of excitation of the output and input, respectively.
While these results cannot be used to construct explicit PE bounds if $\Gamma$ and $d$ are unknown, they reveal interesting insights in the interplay of the input and output PE properties and the system's Markov parameters.

\subsection{Relaxing PE requirements}\label{subsec:pe_lin_tailored}
Corollary~\ref{cor:PE_u_y} provides an input design condition for guaranteeing a PE output for arbitrary output reachable systems.
In this section, we investigate whether this condition can be relaxed when restricting the system class by using additional prior knowledge on the system.
To this end, we consider the condition that the first $r$ Markov parameters vanish, i.e.,
\begin{align}\label{eq:cor_PE_u_y_tailored}
\Gamma_i&=0\>\>\text{for}\>\>i=0,\dots,r-1,\>\>\text{and}\>\>\Gamma_{r}\neq0.
\end{align}
\begin{corollary}\label{cor:PE_u_y_tailored}
If $\{u_k\}_{k=0}^{N-r-1}$ is PE of order $n+1-r$, then, for any $\Sigma\in\bm{\Sigma}_{n,m,p}^{\rmo\rmr}$ satisfying~\eqref{eq:cor_PE_u_y_tailored} and any $x_0\in\bbr^n$, the sequence $\{y_k(\Sigma,x_0,u)\}_{k=0}^{N-1}$ is PE of order $1$.
\end{corollary}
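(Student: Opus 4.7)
The plan is to mimic the proof of Corollary~\ref{cor:PE_u_y}, but to exploit the vanishing of the first $r$ Markov parameters in order to prune the input-output representation of Lemma~\ref{lem:IO_rep} down to something that only depends on $u_{[k-n,k-r]}$. Concretely, I would first observe that the assumption $\Gamma_i=0$ for $i=0,\dots,r-1$ forces, via $M_j=\sum_{q=0}^j d_{j-q}\Gamma_q$, the identities $M_0=M_1=\dots=M_{r-1}=0$. Hence the matrix $M=[M_n\ \dots\ M_1\ M_0]$ has its last $r$ block columns (those multiplying $u_{k-r+1},\dots,u_k$) equal to zero, so that for every $k\geq n$,
\begin{equation*}
Mu_{[k-n,k]}=\tilde{M}u_{[k-n,k-r]},\qquad \tilde{M}:=[M_n\ M_{n-1}\ \dots\ M_r]\in\bbr^{p\times m(n+1-r)}.
\end{equation*}
Combining this with Lemma~\ref{lem:IO_rep} gives the shortened input-output relation $\tilde{M}u_{[k-n,k-r]}=\begin{bmatrix}y_{k-n}&\dots&y_k\end{bmatrix}d$.

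Next I would verify that $\tilde{M}$ has full row rank, which is what allows the input PE level to be transferred to $\tilde{M}u_{[k-n,k-r]}$. Writing $\tilde{\Gamma}:=[\Gamma_r\ \Gamma_{r+1}\ \dots\ \Gamma_n]$, a direct inspection of the formula for $M_j$ (for $r\leq j\leq n$) shows that
\begin{equation*}
\tilde{M}=\tilde{\Gamma}\left(D'\otimes I_m\right),
\end{equation*}
where $D'\in\bbr^{(n+1-r)\times(n+1-r)}$ is the upper anti-triangular Toeplitz matrix with $d_0,d_1,\dots,d_{n-r}$ along its top row and with $d_0$'s on the anti-diagonal. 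Since $d_0\neq0$, $D'$ is invertible, and since $\Gamma=[0\ \dots\ 0\ \tilde{\Gamma}]$ has full row rank by output reachability, $\tilde{\Gamma}$ (and hence $\tilde{M}$) has full row rank.

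The remaining step is bookkeeping analogous to Corollary~\ref{cor:PE_u_y}. A reindexing $j=k-n$ shows that PE of order $n+1-r$ of $\{u_k\}_{k=0}^{N-r-1}$ is exactly the statement
\begin{equation*}
\sum_{k=n}^{N-1}u_{[k-n,k-r]}u_{[k-n,k-r]}^\top\succ0.
\end{equation*}
Conjugating by $\tilde{M}$, which has full row rank, yields $\sum_{k=n}^{N-1}\tilde{M}u_{[k-n,k-r]}u_{[k-n,k-r]}^\top\tilde{M}^\top\succ0$, i.e.\ the sequence $\{\tilde{M}u_{[k-n,k-r]}\}_{k=n}^{N-1}=\{Mu_{[k-n,k]}\}_{k=n}^{N-1}$ is PE of order $1$. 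Invoking Theorem~\ref{thm:PE_Nu_y} then delivers PE of order $1$ of $\{y_k(\Sigma,x_0,u)\}_{k=0}^{N-1}$, finishing the proof.

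The only step that is not pure bookkeeping is the full-row-rank claim for $\tilde{M}$; everything else is either a factoring of $Mu_{[k-n,k]}$ or a reindexing of the sums that already appear in the proofs of Theorem~\ref{thm:PE_Nu_y} and Corollary~\ref{cor:PE_u_y}. I expect the factorization $\tilde{M}=\tilde{\Gamma}(D'\otimes I_m)$ and the observation that $D'$ has $d_0\neq0$ on its anti-diagonal to be the main (though quite mild) obstacle; once this is in place, the whole argument telescopes into the Theorem~\ref{thm:PE_Nu_y} chain of inequalities applied to the truncated signals.
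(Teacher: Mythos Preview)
Your proposal is correct and follows essentially the same route as the paper: factor $Mu_{[k-n,k]}$ through the truncated window $u_{[k-n,k-r]}$, verify that the resulting matrix has full row rank, and then invoke Theorem~\ref{thm:PE_Nu_y}. The only cosmetic difference is that the paper writes the truncated coefficient matrix as $\bar{M}_r=(\bar{d}_r^\top\otimes I_p)\bar{\Gamma}_r$ (the analogue of the \emph{first} equality in~\eqref{eq:M_alt}), whereas you use $\tilde{M}=\tilde{\Gamma}(D'\otimes I_m)$ (the analogue of the \emph{second} equality). Your choice actually makes the full-row-rank check more transparent, since $D'$ is exactly the $(n{+}1{-}r)$-dimensional analogue of $\bar{D}$ in~\eqref{eq:D_bar_def} (note: it is Hankel rather than Toeplitz, with top row $d_{n-r},\dots,d_0$), hence invertible because $d_0\neq0$, and $\tilde{\Gamma}$ inherits full row rank directly from output reachability.
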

\begin{proof}
Note that~\eqref{eq:M_alt} and~\eqref{eq:cor_PE_u_y_tailored} imply
\begin{align}\label{eq:cor_PE_u_y_tailored_proof}
Mu_{[k-n,k]}=
&\bar{M}_ru_{[k-n,k-r]}
\end{align}
with $\bar{M}_r\coloneqq (\bar{d}_r^\top\otimes I_p)\bar{\Gamma}_r$, $\bar{d}_r\coloneqq\begin{bmatrix}d_{n-r}&\dots&d_0\end{bmatrix}^\top$, and 
\begin{align*}
\bar{\Gamma}_r\coloneqq\begin{bmatrix}\Gamma_{r}&0&\dots&0\\
\Gamma_{r+1}&\Gamma_{r}&\ddots&\vdots\\
\vdots&\ddots&\ddots&0\\
\Gamma_n&\dots&\Gamma_{r+1}&\Gamma_{r}
\end{bmatrix}.
\end{align*}
If $\{u_k\}_{k=0}^{N-r-1}$ is PE of order $n+1-r$, then $\{\bar{M}_ru_{[k-n,k-r]}\}_{k=n}^{N-1}$ is PE of order $1$ for arbitrary matrices $\bar{M}_r$ with full row rank.
Together with~\eqref{eq:cor_PE_u_y_tailored_proof}, this implies that $\{Mu_{[k-n,k]}\}_{k=n}^{N-1}$ is PE of order $1$ for arbitrary output reachable systems satisfying~\eqref{eq:cor_PE_u_y_tailored}, i.e., by Theorem~\ref{thm:PE_Nu_y}, the output is PE of order $1$.
\end{proof}
According to Corollary~\ref{cor:PE_u_y_tailored}, the PE requirements of Corollary~\ref{cor:PE_u_y} can be relaxed depending on how many Markov parameters vanish.
For SISO systems,~\eqref{eq:cor_PE_u_y_tailored} is equivalent to system~\eqref{eq:sys_LTI} having relative degree $r$ and $n-r$ zeros.
In this case, Corollary~\ref{cor:PE_u_y_tailored} means that a sufficient condition for the output being PE is that the input is PE of order $1+\text{\#zeros}$, i.e., additional zeros require a richer input signal to achieve the same PE level of the output.
Verifying condition~\eqref{eq:cor_PE_u_y_tailored} only from data is an interesting issue for future research.

Note that the lower bound~\eqref{eq:K2_lower_bound_intuitive_input} is zero if $\Gamma_0=D$ does not have full row rank.
In this case, a non-trivial bound can be derived by following the same strategy as in Corollary~\ref{cor:PE_u_y_tailored}.

Finally, the persistence of excitation condition for the input in Corollary~\ref{cor:PE_u_y_tailored} only involves the data up to time step $N-r-1$.
This is due to the fact that the time steps $k=N-r,\dots,N-1$ have no influence on the output trajectory.

\section{A quantitative and constructive proof of the Fundamental Lemma}\label{sec:WFL}
In this section, we present our main result: 
a novel proof of (a generalization of) the Fundamental Lemma based on~\cite{green1986persistence}, i.e., based on the results in Section~\ref{sec:pe_lin}.
Let us recall the Fundamental Lemma~\cite[Theorem 1]{willems2005note} in the context of state-space systems, adjusted to our notation.
\begin{theorem}\cite{willems2005note}\label{thm:willems}
If $\{u_k\}_{k=0}^{N-1}$ is PE of order $L+n$, then, for any $\Sigma\in\bm{\Sigma}_{n,m,p}^\rmc$ and any $x_0\in\bbr^n$,
\begin{align}\label{eq:thm_willems}
&\quad\mathrm{im}\begin{bmatrix}H_L(u)\\H_L(y(\Sigma,x_0,u))\end{bmatrix}\\\nonumber
&=
\Big\{\begin{bmatrix}\bar{u}\\\bar{y}(\Sigma,\bar{x}_0,\bar{u})\end{bmatrix}\mid \bar{u}\in\bbr^{mL},\bar{x}_0\in\bbr^n\Big\}.
\end{align}
\end{theorem}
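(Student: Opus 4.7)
The strategy is to reduce the image equality to a rank claim that can be handled by the PE machinery of Section~\ref{sec:pe_lin}. The inclusion ``$\subseteq$'' in~\eqref{eq:thm_willems} is immediate from linearity: the $k$-th column of the Hankel matrix on the left-hand side is, by construction, the length-$L$ I/O trajectory of $\Sigma$ emanating from the state $x_k$ of the training trajectory, so any linear combination of columns is again a trajectory from some initial condition. For the reverse inclusion, I would use the standard factorization
$$
\begin{bmatrix} H_L(u) \\ H_L(y(\Sigma,x_0,u)) \end{bmatrix} = \begin{bmatrix} I_{mL} & 0 \\ T_L & O_L \end{bmatrix} \begin{bmatrix} H_L(u) \\ X_0 \end{bmatrix},
$$
where $X_0 := \begin{bmatrix} x_0 & x_1 & \cdots & x_{N-L}\end{bmatrix}$ is the state-snapshot matrix of the training trajectory, $O_L$ is the depth-$L$ observability matrix of $\Sigma$, and $T_L$ is the block Toeplitz matrix of its first $L$ Markov parameters. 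The image of the left factor equals $\{(\bar u,\bar y(\Sigma,\bar x_0,\bar u)) : \bar u\in\bbr^{mL},\ \bar x_0\in\bbr^n\}$, so the theorem reduces to proving that $\bigl[H_L(u)^\top,\ X_0^\top\bigr]^\top$ has full row rank $mL+n$.

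To establish this rank claim, I would apply Section~\ref{sec:pe_lin} to the auxiliary system $\tilde\Sigma=(A,B,\tilde C,\tilde D)$ with
$$
\tilde C = \begin{bmatrix} I_n \\ 0_{m\times n} \end{bmatrix},\qquad \tilde D = \begin{bmatrix} 0_{n\times m} \\ I_m \end{bmatrix},
$$
whose output is $\tilde y_k = [x_k^\top, u_k^\top]^\top$. A direct calculation shows that its Markov parameter matrix
$$
\tilde\Gamma = \begin{bmatrix} 0 & B & AB & \cdots & A^{n-1}B \\ I_m & 0 & 0 & \cdots & 0 \end{bmatrix}
$$
has full row rank $n+m$ if and only if $(A,B)$ is controllable; hence $\tilde\Sigma$ is output reachable under the hypothesis $\Sigma\in\bm{\Sigma}_{n,m,p}^{\rmc}$. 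The order-$L$ extension of Corollary~\ref{cor:PE_u_y} (sketched after its proof) then ensures that $\tilde y$ is PE of order $L$, i.e.\ that $H_L(\tilde y)$ has full row rank $(n+m)L$. After a row permutation this matrix is $\bigl[H_L(x)^\top,\ H_L(u)^\top\bigr]^\top$. Since $X_0$ is the top block-row of $H_L(x)$, the rows of $\bigl[X_0^\top,\ H_L(u)^\top\bigr]^\top$ are a subset of this linearly independent row set and therefore span an $(n+mL)$-dimensional space, finishing the argument.

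The principal obstacle I foresee is the use of the order-$L$ extension of Corollary~\ref{cor:PE_u_y}, which is only stated (not proved) in the paper. This extension is essential to match the sharp input PE order $L+n$ assumed in Theorem~\ref{thm:willems}: a naive route that avoids the extension by lifting $\tilde\Sigma$'s state to include the length-$L$ input window would force one to apply the order-$1$ Corollary~\ref{cor:PE_u_y} on a system of order $n+m(L-1)$, yielding the strictly stronger requirement of PE order $n+m(L-1)+1$ in the MIMO case; closing this MIMO gap back to $L+n$ would require a block-structured version of Corollary~\ref{cor:PE_u_y_tailored} that exploits the many vanishing Markov parameters of the lifted system. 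In its quantitative form, the argument propagates the input PE level $K_\rmu$ through the Toeplitz action encoded in $\bar\Gamma$ to an explicit positive-definite lower bound on $H_L(\tilde y) H_L(\tilde y)^\top$, and in particular on $\sigma_{\min}\!\bigl(\bigl[H_L(u)^\top,\ X_0^\top\bigr]^\top\bigr)$—delivering the quantitative and constructive refinement of Willems' Fundamental Lemma announced in the abstract.
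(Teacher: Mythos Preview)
Your reduction to the rank condition~\eqref{eq:rank_ux_condition} and the introduction of the auxiliary system with output $\tilde y_k=\begin{bmatrix}x_k^\top & u_k^\top\end{bmatrix}^\top$ match the paper's setup exactly. The genuine gap is the step where you invoke the order-$L$ extension of Corollary~\ref{cor:PE_u_y} to conclude that $H_L(\tilde y)$ has full row rank $(n+m)L$: this conclusion is \emph{false} for every $L>1$. The state dynamics $x_{k+1}=Ax_k+Bu_k$ give the identity
\[
\begin{bmatrix} A & B & -I_n & 0 \end{bmatrix}\tilde y_{[k,k+1]}=0\quad\text{for all }k,
\]
so $H_L(\tilde y)$ always has a nontrivial left kernel and its rank is at most $n+mL<(n+m)L$. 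Output reachability of $\tilde\Sigma$ only guarantees that any \emph{single} output value $\tilde y_k$ is reachable; it cannot make arbitrary length-$L$ output windows achievable, because those windows are constrained by the dynamics. Hence the extension you rely on (which the paper states informally and does not prove) fails precisely for the auxiliary system you need it for, and your ``subset of a linearly independent row set'' argument collapses: the big row set is not linearly independent to begin with.

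The paper's proof (Theorem~\ref{thm:WFL}) never asserts anything about $H_L(\tilde y)$. Instead it writes down, for each $k$, the identity
\[
\begin{bmatrix}\Phi_k\\U_k\end{bmatrix} d = Z\, u_{[k-n,k+L-1]},
\]
with the explicit $(n+mL)\times m(L+n)$ matrix $Z$ of~\eqref{eq:thm_WFL_Z_def}, which targets directly the $n+mL$ rows of $\begin{bmatrix}H_1(x_{[0,N-L]})\\H_L(u)\end{bmatrix}$. The same squaring-and-summing trick as in Theorem~\ref{thm:PE_Nu_y} then yields the quantitative bound~\eqref{eq:thm_WFL_bound}, and full row rank of $Z$ (controllability $\Rightarrow$ $M$ has full row rank, together with $d_0\neq 0$) finishes the argument via Proposition~\ref{prop:WFL_intermediate}. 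The key difference from your plan is that the paper repeats the \emph{mechanism} of Corollary~\ref{cor:PE_u_y} on the correctly sized target matrix, rather than appealing to an order-$L$ output-PE statement that cannot hold for $\tilde\Sigma$.
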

Theorem~\ref{thm:willems} provides a parametrization of all system trajectories based on one input-output trajectory, and the result has found wide use in designing and analyzing data-driven control algorithms~\cite{markovsky2021behavioral}.
In the original paper~\cite{willems2005note}, the Fundamental Lemma has been proven in the behavioral framework, and a proof for state-space systems has been given in~\cite{waarde2020willems}.
In the following, we provide a novel, quantitative and constructive proof of Theorem~\ref{thm:willems}, which exploits the system description~\eqref{eq:sys_LTI_IO}.
To this end, we define the extended output $\phi_k=\begin{bmatrix}x_k\\u_k\end{bmatrix}$ which gives rise to the system
\begin{align}\label{eq:sys_LTI_tilde}
x_{k+1}&=Ax_k+Bu_k,\\\nonumber
\phi_k&=\tilde{C}x_k+\tilde{D}u_k,
\end{align}
where $\tilde{C}=\begin{bmatrix}I_n\\0\end{bmatrix}$, $\tilde{D}=\begin{bmatrix}0\\I_m\end{bmatrix}$.
Throughout this section, whenever we apply the results from Section~\ref{sec:pe_lin}, the considered ``output'' is $\phi_k$ and the corresponding ``input-output representation''~\eqref{eq:sys_LTI_IO} is associated with~\eqref{eq:sys_LTI_tilde}.
Note that, for system~\eqref{eq:sys_LTI_tilde}, the Markov parameters $\Gamma$ take the form $\Gamma=\begin{bmatrix}0&\Gamma_{\rmx\rmu}\\I&0\end{bmatrix}$ with the controllability matrix $\Gamma_{\rmx\rmu}=\begin{bmatrix}B&AB&\dots&A^{n-1}B\end{bmatrix}$.
Thus, system~\eqref{eq:sys_LTI_tilde} is output reachable if and only if $(A,B)$ is controllable.

To prove the Fundamental Lemma, we are only concerned with the input-state system~\eqref{eq:sys_LTI_tilde} and consider the matrix
\begin{align}\label{eq:Hux_def}
\begin{bmatrix}H_1(x_{[0,N-L]})\\H_L(u)\end{bmatrix}
=&\begin{bmatrix}x_0&x_1&\dots&x_{N-L}\\
u_0&u_1&\dots&u_{N-L}\\
u_1&u_2&\dots&u_{N-L+1}\\
\vdots&\vdots&\ddots&\vdots\\
u_{L-1}&u_L&\dots&u_{N-1}
\end{bmatrix}.
\end{align}
Specifically, we study under which conditions on the input the rank condition
\begin{align}\label{eq:rank_ux_condition}
\mathrm{rank}\left(\begin{bmatrix}H_1(x_{[0,N-L]})\\H_L(u)\end{bmatrix}\right)=mL+n,
\end{align}
holds, compare~\cite[Corollary 2 (iii)]{willems2005note}.
In the language of Definition~\ref{def:pe}, this means that $\left\{\begin{bmatrix}x_k\\u_{[k,k+L-1]}\end{bmatrix}\right\}_{k=0}^{N-L}$ needs to be PE of order $1$.
Establishing~\eqref{eq:rank_ux_condition} is the hard part of proving the Fundamental Lemma and, once shown, directly implies the statement of Theorem~\ref{thm:willems}, cf.\cite[Lemma 2]{persis2020formulas}.
We summarize this fact in the following statement.
\begin{proposition}\label{prop:WFL_intermediate}
    \eqref{eq:thm_willems} holds if~\eqref{eq:rank_ux_condition} holds.
\end{proposition}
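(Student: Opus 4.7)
The plan is to prove the proposition by a pure linearity/superposition argument that reduces the image characterization \eqref{eq:thm_willems} to the single rank statement \eqref{eq:rank_ux_condition}. This reduction is essentially \cite[Lemma 2]{persis2020formulas}; my task is to spell it out in the notation of this paper.

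First, I would dispatch the inclusion ``$\subseteq$'' in \eqref{eq:thm_willems}. Each column of the left-hand Hankel matrix corresponds to a length-$L$ input-output window of the trajectory $(u,y(\Sigma,x_0,u))$, initiated from the shifted state $x_k$; in particular it is of the form $(\bar u,\bar y(\Sigma,\bar x_0,\bar u))$ for the appropriate choice $\bar x_0=x_k$. By linearity of \eqref{eq:sys_LTI}, any linear combination of such columns is again an input-output trajectory of $\Sigma$ started from some initial state, which gives the claimed inclusion.

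For the reverse inclusion ``$\supseteq$'', I would fix an arbitrary $\bar u\in\bbr^{mL}$ and $\bar x_0\in\bbr^n$. The rank hypothesis \eqref{eq:rank_ux_condition} asserts that the $mL+n$ rows of $\begin{bmatrix}H_1(x_{[0,N-L]})\\H_L(u)\end{bmatrix}$ are linearly independent, hence its image is all of $\bbr^{mL+n}$. Consequently there exists a vector $g\in\bbr^{N-L+1}$ with
\begin{align*}
\begin{bmatrix}H_1(x_{[0,N-L]})\\H_L(u)\end{bmatrix} g = \begin{bmatrix}\bar x_0\\ \bar u\end{bmatrix}.
\end{align*}
By construction $H_L(u)g=\bar u$. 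Because $H_L(y(\Sigma,x_0,u))g$ is a linear combination of output trajectories of $\Sigma$, superposition applied to \eqref{eq:sys_LTI} yields that it equals the output trajectory produced by $\Sigma$ with initial state $\bar x_0$ and input $\bar u$, i.e.\ $\bar y(\Sigma,\bar x_0,\bar u)$. Stacking these two identities gives the required element of the image.

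I do not expect any serious obstacle: the proposition is a routine consequence of LTI superposition and elementary linear algebra, and once the two directions are in place there is nothing more to show. The genuine difficulty that this proposition \emph{isolates} is, of course, establishing the rank/PE statement \eqref{eq:rank_ux_condition} in the first place, which is where the quantitative results of Section~\ref{sec:pe_lin} — in particular Corollary~\ref{cor:PE_u_y} applied to the extended output $\phi_k$ of system~\eqref{eq:sys_LTI_tilde} — will have to do the work in the subsequent argument.
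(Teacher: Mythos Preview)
Your argument is correct and is precisely the standard superposition-plus-rank argument that the paper defers to by citing \cite[Lemma 2]{persis2020formulas}; the paper does not give its own proof of Proposition~\ref{prop:WFL_intermediate} beyond that citation. Your write-up is exactly what that cited lemma contains, so there is nothing to add.
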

In the following,
we derive a sufficient condition for~\eqref{eq:rank_ux_condition} to hold for arbitrary initial conditions and arbitrary controllable systems, which, according to Proposition~\ref{prop:WFL_intermediate}, provides a sufficient condition for~\eqref{eq:thm_willems}.
To this end, with $M$ corresponding to system~\eqref{eq:sys_LTI_tilde}, we define
\begin{align}\label{eq:thm_WFL_Z_def}
&Z=\left[\begin{array}{cc}
M&0_{(m+n)\times m(L-1)}\\\hline
0_{m(L-1)\times m}&T\otimes I_m
\end{array}\right]\\\nonumber
&
=\begin{bmatrix}M_n&M_{n-1}&\dots&M_1&M_0&0&\dots&0\\
0&d_nI&d_{n-1}I&\dots&d_1I&d_0I&\ddots&\vdots\\
\vdots&\ddots&\ddots&\ddots&\ddots&\ddots&\ddots&0\\
0&\dots&0&d_nI&d_{n-1}I&\dots&d_1I&d_0I
\end{bmatrix},
\end{align}
where $T$ is the Toeplitz matrix
\begin{align}\label{eq:T_Toeplitz}
T=
\begin{bmatrix}d_n&\dots&d_0&0&0\\
0&\ddots&\ddots&\ddots&0\\
0&0&d_n&\dots&d_0
\end{bmatrix}\in\bbr^{(L-1)\times(L+n-1)}.
\end{align} 
\begin{theorem}\label{thm:WFL}
    If $\{u_k\}_{k=0}^{N-1}$ is PE of order $L+n$, then,
for any $\Sigma\in\bm{\Sigma}_{n,m,p}^{\rmc}$ and any $x_0\in\bbr^n$,
\eqref{eq:thm_willems} holds.
Further, if $\{u_k\}_{k=0}^{N-1}$ is $K_\rmu$-PE of order $L+n$, then
\begin{align}\label{eq:thm_WFL_bound}
\begin{bmatrix}H_1(x_{[0,N-L]})\\H_L(u)\end{bmatrix}
\begin{bmatrix}H_1(x_{[0,N-L]})\\H_L(u)\end{bmatrix}^\top
\succeq\frac{1}{n+1}ZK_\rmu Z^\top.
\end{align}
\end{theorem}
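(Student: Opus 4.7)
The plan is to apply the machinery of Section~\ref{sec:pe_lin} to the input-state system~\eqref{eq:sys_LTI_tilde}, viewing $\phi_k=[x_k^\top,u_k^\top]^\top$ as the ``output''. Controllability of $(A,B)$ is equivalent to output reachability of this augmented system, so Lemma~\ref{lem:IO_rep} and the Cauchy--Schwarz estimate underlying the proof of Theorem~\ref{thm:PE_Nu_y} can be adapted. The overall strategy is to derive an algebraic identity relating the action of $Z$ on a shifted input window to a $d$-weighted sum of the stacked windows $\psi_k:=[x_k^\top,u_{[k,k+L-1]}^\top]^\top$, bound the resulting outer product using $\lVert d\rVert_2=1$, and then sum using the $K_\rmu$-PE hypothesis. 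The resulting matrix inequality~\eqref{eq:thm_WFL_bound} will yield the qualitative statement via Proposition~\ref{prop:WFL_intermediate}, provided the right-hand side is positive definite.

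The core algebraic step is to show that, for each $t\in\{0,\dots,N-L-n\}$,
\begin{align*}
Z\,u_{[t,t+L+n-1]} = \sum_{j=0}^{n} d_j\,\psi_{t+n-j}.
\end{align*}
The first $m+n$ rows of this identity are precisely~\eqref{eq:sys_LTI_IO} applied to system~\eqref{eq:sys_LTI_tilde}, since the top-left block of $Z$ is $M$ and $\phi_k=[x_k^\top,u_k^\top]^\top$. The remaining $m(L-1)$ rows stem from $T\otimes I_m$: a direct expansion shows that its $i$-th $m$-row block applied to the input window equals $\sum_{j=0}^n d_j u_{t+n+i-j}$, which matches the $u$-tail of $\sum_j d_j\psi_{t+n-j}$. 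With this identity, the Cauchy--Schwarz bound $(\sum_j d_j v_j)(\sum_j d_j v_j)^\top \preceq \lVert d\rVert_2^2\sum_j v_j v_j^\top$ used in~\eqref{eq:thm_PE_nu_y_proof1}, combined with $\lVert d\rVert_2=1$, yields $(Zw_t)(Zw_t)^\top\preceq\sum_{j=0}^n \psi_{t+n-j}\psi_{t+n-j}^\top$ where $w_t:=u_{[t,t+L+n-1]}$. Pre- and post-multiplying the $K_\rmu$-PE inequality $K_\rmu\preceq\sum_t w_tw_t^\top$ by $Z$ and $Z^\top$ and then reindexing the double sum -- each $\psi_k$ is hit by at most $n+1$ pairs $(t,j)$ with $t+n-j=k$ -- produces the factor $\tfrac{1}{n+1}$ and establishes~\eqref{eq:thm_WFL_bound}.

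For the qualitative claim, it remains to show $ZK_\rmu Z^\top\succ0$ under controllability and PE of order $L+n$. Since $K_\rmu\succ0$ from PE, this reduces to $Z$ having full row rank $mL+n$. The lower $m(L-1)$ rows of $Z$ form the banded Toeplitz block $[0,T\otimes I_m]$, which has full row rank because $d_0\neq0$; the top $m+n$ rows contain $M$, which has full row rank by output reachability of~\eqref{eq:sys_LTI_tilde} (equivalent to controllability of $(A,B)$). A careful inspection of the block-triangular structure of $Z$ -- isolating an invertible lower-triangular $d_0$-Toeplitz sub-matrix governing the ``$u$-output'' directions, and the controllability matrix $\Gamma_{\rmx\rmu}$ governing the ``$x$-output'' directions -- confirms that $Z$ has full row rank. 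Combining this with~\eqref{eq:thm_WFL_bound} yields~\eqref{eq:rank_ux_condition} and hence~\eqref{eq:thm_willems} via Proposition~\ref{prop:WFL_intermediate}. I expect the main obstacle to be the combinatorial bookkeeping in the key algebraic identity and in the full-row-rank argument for $Z$; the remaining estimates are a mechanical adaptation of the proof of Theorem~\ref{thm:PE_Nu_y}.
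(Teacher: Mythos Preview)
Your proposal is correct and follows essentially the same approach as the paper: the key identity $Z\,u_{[t,t+L+n-1]}=\sum_{j=0}^n d_j\psi_{t+n-j}$ is exactly the paper's relation $\begin{bmatrix}\Phi_k\\U_k\end{bmatrix}d=Zu_{[k-n,k+L-1]}$ written in column-vector form, and your Cauchy--Schwarz step plus the counting argument for the factor $\tfrac{1}{n+1}$ reproduce the paper's chain of inequalities verbatim. The full-row-rank argument for $Z$ is also the same (the paper simply notes that $M$ has full row rank by controllability and $d_0\neq0$, without the additional block-triangular discussion you sketch).
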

\begin{proof}
Define
\begin{align*}
\Phi_k&\coloneqq\begin{bmatrix}\phi_{k-n}&\dots&\phi_k\end{bmatrix}
=\begin{bmatrix}x_{k-n}&\dots&x_k\\u_{k-n}&\dots&u_k\end{bmatrix},\\
U_k&\coloneqq \begin{bmatrix}
u_{k-n+1}&u_{k-n+2}&\dots&u_{k+1}\\
u_{k-n+2}&u_{k-n+3}&\dots&u_{k+2}\\
\vdots&\vdots&\ddots&\vdots\\
u_{k-n+L-1}&u_{k-n+L}&\dots&u_{k+L-1}\end{bmatrix}.
\end{align*}
First, note that
\begin{align*}
U_k d&=\begin{bmatrix}d_nu_{k-n+1}+\dots+d_0u_{k+1}\\
\vdots\\
d_nu_{k-n+L-1}+\dots+d_0u_{k+L-1}
\end{bmatrix}\\
&=(T\otimes I_m)u_{[k-n+1,k+L-1]}
\end{align*}
with $T$ from~\eqref{eq:T_Toeplitz}.
Hence, using~\eqref{eq:sys_LTI_IO}, we have
\begin{align}\label{eq:thm_WFL_proof_assertion}
\begin{bmatrix}\Phi_k\\
U_k\end{bmatrix}d&=
\begin{bmatrix}M&0\\0&T\otimes I_m\end{bmatrix}\begin{bmatrix}u_{[k-n,k]}\\u_{[k-n+1,k+L-1]}\end{bmatrix}\\\nonumber
&=Zu_{[k-n,k+L-1]}.
\end{align}
Using that $\{u_k\}_{k=0}^{N-1}$ is $K_\rmu$-PE of order $L+n$, we obtain
\begin{align}
ZK_\rmu Z^\top\preceq&\sum_{k=n}^{N-L}Zu_{[k-n,k+L-1]}u_{[k-n,k+L-1]}^\top Z^\top\\\nonumber
\stackrel{\eqref{eq:thm_WFL_proof_assertion}}{=}
&\sum_{k=n}^{N-L}\begin{bmatrix}\Phi_k\\U_k\end{bmatrix}dd^\top\begin{bmatrix}\Phi_k\\U_k\end{bmatrix}^\top
\preceq\lVert d\rVert_2^2\sum_{k=n}^{N-L}\begin{bmatrix}\Phi_k\\U_k\end{bmatrix}\begin{bmatrix}\Phi_k\\U_k\end{bmatrix}^\top\\\nonumber
\preceq&(n+1)\sum_{k=0}^{N-L}
\begin{bmatrix}x_k\\u_{[k,k+L-1]}\end{bmatrix}
\begin{bmatrix}x_k\\u_{[k,k+L-1]}\end{bmatrix}^\top,
\end{align}
which shows~\eqref{eq:thm_WFL_bound}.
Note that $Z$ has full row rank since $M$ has full row rank (by controllability) and $d_0\neq0$.
Together with Proposition~\ref{prop:WFL_intermediate}, this proves the statement.
\end{proof}

Theorem~\ref{thm:WFL} shows that (a variation of) the main result from Section~\ref{sec:pe_lin} can be used to prove the Fundamental Lemma from~\cite{willems2005note}.
In fact, for $L=1$, Theorem~\ref{thm:WFL} follows from Corollary~\ref{cor:PE_u_y} without any modification, i.e., the one-step version of the Fundamental Lemma~\cite[Corollary 2 (ii)]{willems2005note} is actually a direct consequence of~\cite[Corollary 2.1]{green1986persistence}.
The longer proof above is required for the general case $L>1$.

In addition to providing a novel proof strategy, Theorem~\ref{thm:WFL} in fact generalizes the original Fundamental Lemma from~\cite{willems2005note}
by providing a \emph{quantitative and constructive} statement.
Specifically,~\eqref{eq:thm_WFL_bound} reveals a direct connection between the PE level of the input and that of the input-state matrix $\begin{bmatrix}H_1(x_{[0,N-L]})\\H_L(u)\end{bmatrix}$.
While positive definiteness of the left-hand side in~\eqref{eq:thm_WFL_bound} (i.e., the existence of \emph{some} lower bound) is also guaranteed in existing proofs of the Fundamental Lemma~\cite{willems2005note,waarde2020willems}, we construct an explicit lower bound on the PE level.
Improved PE bounds, in turn, imply better robustness with respect to noise, e.g., in system identification~\cite{coulson2022robust} or in data-driven predictive control~\cite[Theorem 3]{berberich2021guarantees}.

These insights are of practical use even in the realistic scenario that the system parameters in $Z$ are only known approximately or if they are fully unknown.
To be precise, suppose only an estimate $\hat{Z}$ of $Z$ in~\eqref{eq:thm_WFL_Z_def} is available with error bound $\lVert\hat{Z}-Z\rVert_2<\varepsilon$ for some $\varepsilon>0$, and we want to use this knowledge to guarantee a certain PE bound in~\eqref{eq:thm_WFL_bound}.
If we let $\{u_k\}_{k=0}^{N-1}$ be $K_\rmu$-PE of order $L+n$, then the lower bound in~\eqref{eq:thm_WFL_bound} implies
\begin{align*}
&\begin{bmatrix}H_1(x_{[0,N-L]})\\H_L(u)\end{bmatrix}
\begin{bmatrix}H_1(x_{[0,N-L]})\\H_L(u)\end{bmatrix}^\top
\succeq\frac{1}{n+1}ZK_\rmu Z^\top\\
\succeq&\frac{1}{2(n+1)}\hat{Z}K_\rmu\hat{Z}^\top-\frac{\varepsilon^2}{n+1}\sigma_{\max}(K_\rmu).
\end{align*}
In this sense, our PE characterization is \emph{robust} with respect to model inaccuracy.
Similar bounds can be derived in the presence of process or measurement noise.
Working out these insights in more detail and employing them for robust data-driven control is an interesting issue for future research.

We note that related results (Fundamental Lemma with quantitative PE bounds) are provided in~\cite{coulson2022robust} using a proof strategy similar to~\cite{waarde2020willems}, however, only diagonal PE bounds and $L=1$ are addressed.

\section{Conclusion}
We presented a novel, quantitative and constructive proof of Willems' Fundamental Lemma, which has received significant attention in the context of data-driven control.
Additionally, we extended classical results on adaptive control, which inspired our proof.
We believe that the proposed new proof technique paves the way for further generalizations of the Fundamental Lemma and, since it enables robust PE guarantees for noisy/uncertain setups, it opens the door to new robust data-driven control results.


\bibliographystyle{IEEEtran}
\bibliography{Literature}

\renewcommand\thesection{\Alph{section}}
\setcounter{section}{1}

\section*{Appendix: On necessity in Theorem~\ref{thm:PE_Nu_y}}
\cite[Theorem 2.1]{green1986persistence} contains the following claim.
\begin{claim}\label{cl:app}
    If $\{y_k(\Sigma_0,x_0,u)\}_{k=0}^{N-1}$ is PE of order $1$ for any $x_0\in\bbr^n$, then
$\{Mu_{[k-n,k]}\}_{k=n}^{N-1}$ is PE of order $1$.
\end{claim}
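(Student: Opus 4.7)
The plan is to refute Claim~\ref{cl:app} by constructing an output reachable system (without observability) together with an input for which $\{Mu_{[k-n,k]}\}_{k=n}^{N-1}$ fails to be PE of order $1$, yet $\{y_k(\Sigma_0,x_0,u)\}_{k=0}^{N-1}$ is PE of order $1$ for \emph{every} $x_0$. The underlying reason the converse direction cannot hold in general is that, absent observability, the zero-input responses $\{CA^k x_0\}$ span only a proper subspace of $\ker d(q)$, so a nonzero forced response lying in $\ker d(q)$ cannot be canceled by any choice of $x_0$.

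Concretely, I would take the SISO second-order system $A = I_2$, $B = \begin{bmatrix}0 & 1\end{bmatrix}^\top$, $C = \begin{bmatrix}0 & 1\end{bmatrix}$, $D = 0$. The Markov parameters $\Gamma = \begin{bmatrix}0 & 1 & 1\end{bmatrix}$ have full row rank, so the system is output reachable; meanwhile $\mathrm{rank}\!\begin{bmatrix}C^\top & (CA)^\top\end{bmatrix}^\top = 1$, so it is not observable. From $d(z) = (z-1)^2$ one reads off $d = \begin{bmatrix}1 & -2 & 1\end{bmatrix}^\top$ (prior to normalization by $\lVert d\rVert_2=1$), and~\eqref{eq:M_alt} gives $M = \begin{bmatrix}-1 & 1 & 0\end{bmatrix}$, hence $Mu_{[k-2,k]} = u_{k-1} - u_{k-2}$.

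For the input, I would choose $u_k = c$ with $c\neq 0$ for $k = 0, 1, \dots, N-2$ and $u_{N-1}$ arbitrary. By construction $Mu_{[k-2,k]} = 0$ for all $k \in \bbi_{[2,N-1]}$, so the sequence $\{Mu_{[k-n,k]}\}_{k=n}^{N-1}$ is not PE of order $1$. On the other hand, since $A = I_2$, one has $x_k = \begin{bmatrix}x_0^{(1)} & x_0^{(2)} + kc\end{bmatrix}^\top$ and therefore $y_k(\Sigma_0,x_0,u) = x_0^{(2)} + kc$ for every $x_0 \in \bbr^2$. This is a non-constant affine function of $k$ with at most one integer root; provided $N \geq 2$ we obtain $\sum_{k=0}^{N-1} y_k^2 > 0$ uniformly in $x_0$, i.e., the output is PE of order $1$ for every $x_0$. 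Hence the hypothesis of Claim~\ref{cl:app} holds while its conclusion is violated.

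The main obstacle I anticipate is not the counterexample itself but isolating the precise step in the proof of~\cite[Theorem~2.1]{green1986persistence} that quietly invokes observability. A natural flawed necessity argument would start from a left annihilator $v$ of $\sum_{k=n}^{N-1} Mu_{[k-n,k]}u_{[k-n,k]}^\top M^\top$ and try to exhibit an $x_0$ such that $v^\top y_k \equiv 0$; this step requires that the observability map $x_0 \mapsto \{CA^k x_0\}_{k=0}^{N-1}$ have image equal to the whole of $\ker d(q)$, which is precisely the observability assumption absent from Claim~\ref{cl:app}. Pinpointing and highlighting this gap would round off the appendix discussion.
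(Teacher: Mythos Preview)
Your counterexample is correct and does refute Claim~\ref{cl:app}: with $A=I_2$, $B=\begin{bmatrix}0&1\end{bmatrix}^\top$, $C=\begin{bmatrix}0&1\end{bmatrix}$, $D=0$ and constant input $u_k=c\neq0$, one indeed obtains $M=\begin{bmatrix}-1&1&0\end{bmatrix}$, hence $Mu_{[k-2,k]}=u_{k-1}-u_{k-2}\equiv0$, while $y_k=x_0^{(2)}+kc$ cannot vanish identically for any $x_0$. The paper, however, takes a different route: it uses the \emph{observable} two-output system $A=\begin{bmatrix}0&0\\1&0\end{bmatrix}$, $B=\begin{bmatrix}1\\0\end{bmatrix}$, $C=I_2$, $D=0$ together with an impulse input $u=\begin{bmatrix}1&0&\dots&0\end{bmatrix}^\top$, and checks directly that $H_1(y)$ has full row rank for every $x_0$ although $\{Mu_{[k-2,k]}\}_{k=2}^{N-1}$ is not PE.

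The paper's diagnosis of the gap in~\cite{green1986persistence} is also sharper than yours. The failure is not observability of $(A,C)$ but of $(A,a^\top C)$ for the particular annihilating direction $a\in\bbr^p$: the observable realization of the transfer function $u\mapsto a^\top y$ then has order $\tilde n<n$, so the argument of~\cite{green1986persistence} can only enforce $a^\top y_k=0$ for $k=0,\dots,\tilde n-1$, not for $k=0,\dots,n-1$ as required to propagate~\eqref{eq:sys_LTI_IO}. Your SISO example collapses this distinction (with $p=1$, $a$ is scalar and $(A,a^\top C)$ is observable iff $(A,C)$ is), which is why your wording points to observability of $(A,C)$; the paper's example, being observable as a pair $(A,C)$, shows this is \emph{not} the right hypothesis to restore the claim. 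What your construction buys is simplicity (SISO, constant input, closed-form output); what the paper's buys is that it isolates the precise obstruction and demonstrates that even full observability of $\Sigma_0$ does not rescue Claim~\ref{cl:app}.
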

Claim~\ref{cl:app} means that PE of order $1$ of $\{Mu_{[k-n,k]}\}_{k=n}^{N-1}$ is not only sufficient but also
\emph{necessary} for $\{y_k(\Sigma_0,x_0,u)\}_{k=0}^{N-1}$ being PE of order $1$ for arbitrary initial conditions.
However, as we show in the following, the claim is incorrect in general.

Suppose the system $\Sigma_0$ takes the form
\begin{align}\label{eq:app_counterexample}
    A=\begin{bmatrix}0&0\\1&0
    \end{bmatrix},\>
    B=\begin{bmatrix}1\\0\end{bmatrix},\>
    C=I_2,\>D=0.
\end{align}
With the minimal polynomial $A^2=0$, we have $
    d=\begin{bmatrix}0&0&1\end{bmatrix}^\top$,
    $M=\begin{bmatrix}0&1&0\\1&0&0
    \end{bmatrix}$.
Suppose now the input is chosen as $
    u_{[0,N-1]}=\begin{bmatrix}1&0&\dots&0\end{bmatrix}^\top$.
For a fixed initial condition $x_0=\begin{bmatrix}
x_0(1)\\x_0(2)
\end{bmatrix}$, the resulting output is
\begin{align}\label{eq:app_proof3}
    H_1(y)=&\begin{bmatrix}y_0&y_1&y_2&y_3&\dots&y_{N-1}
    \end{bmatrix}
    \\\nonumber
    =&\begin{bmatrix}
    x_0(1)&1&0&0&\dots&0\\
    x_0(2)&x_0(1)&1&0&\dots&0
    \end{bmatrix}.
\end{align}
Note that $H_1(y)$ has full row rank for any initial condition $x_0\in\bbr^2$.
On the other hand, $\{Mu_{[k-n,k]}\}_{k=n}^{N-1}$ is not PE of order $1$ since $Mu_{[0,2]}=\begin{bmatrix}
0\\1\end{bmatrix}$ but $Mu_{[k-n,k]}=0$ for all $k=n+1,\dots,N-1$.
This invalidates Claim~\ref{cl:app}.

Let us take a closer look at the proof of necessity in~\cite[Theorem 2.1]{green1986persistence}, i.e., an attempt of proof for Claim~\ref{cl:app}.
To this end, suppose $\{Mu_{[k-n,k]}\}_{k=n}^{N-1}$ is not PE of order $1$, i.e., there exists $0\neq a\in\bbr^p$ such that 
$a^\top Mu_{[k-n,k]}=0$ for $k=n,\dots,N-1$.
By~\eqref{eq:sys_LTI_IO}, this implies for $k=n,\dots,N-1$
\begin{align}\label{eq:app_proof}
    \sum_{j=0}^nd_ja^\top y_{k-j}=0.
\end{align}
The main proof idea from~\cite{green1986persistence} is now to choose initial conditions such that $a^\top y_k=0$ for $k=0,\dots,n-1$.
If this were possible, then one could repeatedly apply~\eqref{eq:app_proof} to conclude that $a^\top y_k=0$ for all $k=n,\dots,N-1$, thus proving
that the output is not PE of order $1$, i.e., proving Claim~\ref{cl:app}.
In~\cite{green1986persistence}, it is stated that such initial conditions can be chosen based on an observable realization of the
transfer function from $u$ to $a^\top y$.
Indeed, suppose $(\tilde{A},\tilde{B},\tilde{C},\tilde{D})$ is such a realization of order $\tilde{n}$, and denote the corresponding
state by $\tilde{x}_k\in\bbr^{\tilde{n}}$ and the output by $\tilde{y}_k=a^\top y_k\in\bbr$.
The system dynamics imply
\begin{align}\label{eq:app_proof2}
    \tilde{y}_{[0,\tilde{n}-1]}=\Phi_{\tilde{n}}\tilde{x}_0+\calT_{\tilde{n}}u_{[0,\tilde{n}-1]}
\end{align}
for the observability matrix $\Phi_{\tilde{n}}$ and a suitable matrix $\calT_{\tilde{n}}$.
Since $\Phi_{\tilde{n}}$ has full column rank (by observability) and is square (since $\tilde{y}_k\in\bbr$), it is invertible.
Hence, choosing the initial condition $\tilde{x}_0=-\Phi_{\tilde{n}}^{-1}\calT_{\tilde{n}}u_{[0,\tilde{n}-1]}$ leads to 
$\tilde{y}_k=a^\top y_k=0$ for $k=0,\dots,\tilde{n}-1$.

The problem arises from the fact that $\tilde{n}$ need not be equal to $n$.
In particular, if $\tilde{n}<n$, there may not exist initial conditions
which ensure that $a^\top y_k=0$ holds for $k=\tilde{n},\dots,n-1$.
Indeed, for the system~\eqref{eq:app_counterexample}, we have $a=\begin{bmatrix}1\\0
\end{bmatrix}$ and the system $(A,B,a^\top C,a^\top D)$ is not observable.
The state of an observable realization necessarily has dimension $\tilde{n}=1<2=n$.
This means that we can only choose initial conditions to ensure $a^\top y_0=0$, but not $a^\top y_1=0$, cf.~\eqref{eq:app_proof3}.

In view of the above argument, the proof of necessity from~\cite{green1986persistence} applies and Claim~\ref{cl:app} holds if
$\tilde{n}=n$, i.e., $(A,a^\top C)$ is observable.
This gives rise to the following open questions:
Are there relevant system classes for which observability of
$(A,a^\top C)$ can be guaranteed for any $a$ satisfying $a^\top Mu_{[k-n,k]}$ for $k=n,\dots,N-1$, independent of the choice of input?
Conversely, for which systems is $(A,a^\top C)$ not observable, i.e., $\tilde{n}<n$, such that smaller levels of PE for the input may suffice to achieve a certain PE level for the output?
What are the implications for necessity of PE in Corollary~\ref{cor:PE_u_y} and in the Fundamental Lemma (Theorem~\ref{thm:WFL})?
We leave these questions for future research.

\end{document}